\title{Kochen-Specker sets in four-dimensional spaces}
\author{
Brandon Elford\\
Petr Lison\v{e}k\thanks{Corresponding author.
E-mail: plisonek@sfu.ca}
\\
\ \\
Department of Mathematics\\
Simon Fraser University\\
Burnaby, BC, V5A 1S6\\
Canada
}
\newtheorem{theorem}{Theorem}[section]
\newtheorem{lemma}[theorem]{Lemma}
\newtheorem{proposition}[theorem]{Proposition}
\theoremstyle{definition}
\newtheorem{definition}[theorem]{Definition}
\def\Z{{\mathbb Z}}
\def\C{{\mathbb C}}
\def\R{{\mathbb R}}
\def\V{{\mathcal V}}
\def\B{{\mathcal B}}
\def\z{\zeta}
\begin{document}

\date{}

\maketitle

\begin{abstract}
For the first time we construct an infinite family of Kochen-Specker
sets in a space of fixed dimension, namely in $\R^4$. 
While most of the previous constructions of Kochen-Specker
sets have been based on computer search,
our construction is analytical and it comes with a short,
computer-free proof.
\end{abstract}

\section{Introduction}
The Kochen-Specker theorem (KS~theorem)
is an important result in quantum mechanics 
\cite{KS-paper}.
It demonstrates the contextuality of quantum mechanics,
which is one of its properties that may become crucial
in quantum information theory \cite{How}.
In this paper we focus 
on proofs of the KS theorem that are given by showing
that, for $n\ge 3$, 
there does not exist a function $f:\C^n\rightarrow \{0,1\}$
such that for every orthogonal basis $B$ of $\C^n$ there
exists {\em exactly one} vector $x\in B$ such that $f(x)=1$,
where $\C^n$ denotes the $n$-dimensional vector space over the field
of complex numbers.
(The function $f$ is sometimes called a ``two-valued state.'')
This particular approach has been used in many publications,
see for example \cite{Adan-18,Lis-PRA,WA-preprint} 
and many references cited therein.
Definition \ref{def-KS-pair} 
given below formalizes 
one common way of constructing such proofs,
using a simple parity argument.
Therefore the structures that satisfy 
Definition \ref{def-KS-pair} are sometimes
referred to as {\em parity proofs} of the KS~theorem.

\begin{definition}
\label{def-KS-pair}
We say that $(\V,\B)$ is a {\em Kochen-Specker pair in $\C^n$}
if it meets the following conditions:
\begin{itemize}
\item[(1)] 
$\V$ is a finite set of vectors in $\C^n$.
\item[(2)]
$\B=(B_1,\ldots,B_k)$ where $k$ is odd,
and for all $i=1,\ldots,k$
we have that $B_i$ is an orthogonal basis of $\C^n$
and $B_i\subset \V$.
\item[(3)] 
For each $v\in\V$ the number of $i$ such that $v\in B_i$ is even.
\end{itemize}
\end{definition}

It is quite common in the literature 
\cite{T7-experiment,Lis-PRA,WA-preprint}
to refer to a KS pair 
as a {\em KS set,} and we will do so sometimes in this paper.

An extensive and recent summary of known examples of KS sets
in low dimensions is presented in \cite{Pavi}.
It turns out that until recent time,
the vast majority of known examples
have been found by  computer search,
however without much insight in the sets generated  \cite{Pavi}.
More recently, first computer-free constructions
have appeared that relate KS sets to some other
mathematical structures such as Hadamard matrices
\cite{Lis-Had}.
In this paper we continue this trend by giving
a simple, computer-free construction
of an infinite family of KS sets in $\R^4$.
This is the first time that an infinite family
of inequivalent KS sets in a space of fixed dimension is found.
Of course, an infinite continuous family of KS sets can be trivially
	constructed from one given KS set by applying  
	unitary transformations to it. We consider the KS sets
	thus obtained as {\em equivalent.}
	Our construction produces an infinite family
	of {\em inequivalent} KS sets; in particular the number of rays of our KS sets can attain any value that can be
	expressed as a product of two odd and relatively prime
	integers both greater than or equal to 3 (see Theorem 
	\ref{thm-main} below).

Moreover, four is the smallest possible dimension
in which KS sets 
described in Definition~\ref{def-KS-pair} can exist,
since the definition clearly requires the dimension to be even,
and the KS~theorem only holds in dimension at least~3.

KS sets
are key tools for proving some fundamental results in quantum theory 
and they also have 
various  applications in quantum information processing, see
\cite{T7-experiment,Lis-PRA} and the references therein.

\section{The new construction}
\label{sec-constr}

For integers $m>0$ and $s\ge 0$ we define the matrix
\begin{equation}
\label{eq-Rms}
R_{m,s}=\left(
\begin{array}{cc}
\cos\left(\frac{2\pi s}{m}\right)
& -\sin\left(\frac{2\pi s}{m}\right) \\
\sin\left(\frac{2\pi s}{m}\right)
&
\cos\left(\frac{2\pi s}{m}\right)
\end{array}
\right).
\end{equation}

%%%% PRIVATE MACRO DEFINITIONS

\def\cv#1#2{\left(
\begin{array}{c}
#1\\
#2
\end{array}
\right)
}

\def\rv#1#2{\left( #1,\: #2 \right) }

\def\ot{\otimes}

%%%%%%%%%%%%%%%%%%%%%%%%%%%%%%%%

By $\otimes$ we denote the Kronecker product of matrices.
We now state our main result.

\begin{theorem}
\label{thm-main}
Let $p,q\ge 3$ be relatively prime odd integers,
let $k_p,k_q$ be integers relative prime
to $p$ and $q$ respectively, and let $c$ be a non-zero real number
such that 
\begin{equation}
\label{eq-csq}
c^2
=
-\frac{\cos\left(2\pi \left(\frac{k_p}{p}-\frac{k_q}{q}\right)\right)}
{\cos\left(2\pi \left(\frac{k_p}{p}+\frac{k_q}{q}\right)\right)}.
\end{equation}

Let $a,b$ be vectors in $\R^4$ defined by
\begin{equation}
\label{eq-a-b}
a=\left(
\begin{array}{c}
(1-c)\cos\frac{2\pi k_q}{q}\\
(1-c)\sin\frac{2\pi k_q}{q}\\
-(1+c)\sin\frac{2\pi k_q}{q}\\
(1+c)\cos\frac{2\pi k_q}{q}
\end{array}
\right),
\ \ \ 
b=\left(
\begin{array}{c}
c+1 \\
0 \\
0 \\
c-1
\end{array}
\right).
\end{equation}
Let $r$ be the unique integer in the interval $(0,pq)$ such that
$r\equiv 1\pmod{p}$, $r\equiv -1\pmod{q}$. Let $M=R_{p,k_p}\ot R_{q,k_q}$.
We have:

(i) For $0\le i < pq$ the set $B_i=\{ M^ia, M^{i-1}a, M^ib, M^{i-r}b \}$
is an orthogonal basis of $\R^4$.

(ii) Let $\V=\{ M^ia \::\: 0\le i<pq \}\cup\{ M^ib \::\: 0\le i<pq \}$
and let $\B=\{ B_i \::\: 0\le i<pq \}$.
Then $(\V,\B)$ is a Kochen-Specker pair.
\end{theorem}
\begin{proof}
		The condition that $p$ and $q$ are relatively prime is used
		to guarantee the existence of the integer $r$ with the given properties,
		using the Chinese remainder theorem.
	
(i) 
The fact that $B_0$ is an orthogonal basis of $\R^4$
is proved
in Lemma~\ref{lem-B0-orth} below. 
Since $M$ is orthogonal, it follows that $B_i$
is an orthogonal basis of $\R^4$ for each $i$,
since $B_i$ is the image of $B_0$ under $M^i$.

(ii)
Let the indices of $B$ be taken modulo~$pq$.
For each $i$,
the vector $M^ia$ belongs to two bases, namely $B_i$ and $B_{i+1}$,
and
the vector $M^ib$ belongs to two bases, namely $B_i$ and $B_{i+r}$.
In Proposition~\ref{prop-vec-distinct} below we show
that the vectors in the set $\V$ are pairwise linearly independent.
Hence each element of $\V$ belongs to exactly
two bases in $\B$, and all conditions for Kochen-Specker pair are satisfied.
\end{proof}

We note that for given $p$ and $q$
it is always possible to choose $k_p$ and $k_q$
such that $c$ is a non-zero real number. There are many possible choices;
one of them is
\begin{equation}
\label{eq-kx}
				k_x =
\begin{cases} 
\left\lceil\frac{x}{4}\right\rceil \mbox{\ if\ }  x\equiv3\pmod4\\
\left\lfloor\frac{x}{4}\right\rfloor \mbox{\ if\ }  x\equiv1\pmod4.
\end{cases}
\end{equation}
Then it is easy to show that
\[
				\frac{\pi}{2} < 2\pi\left(\frac{k_p}{p}+\frac{k_q}{q}\right) < \frac{3\pi}{2}
\mbox{\ \ \ and \ \ } -\frac{\pi}{2} < 2\pi\left(\frac{k_p}{p}-\frac{k_q}{q}\right) < \frac{\pi}{2}
\]
hence
\[
-\frac{\cos\left(2\pi \left(\frac{k_p}{p}-\frac{k_q}{q}\right)\right)}
{\cos\left(2\pi \left(\frac{k_p}{p}+\frac{k_q}{q}\right)\right)}
>0.
\]

We will now work towards proving that $B_0$ is an orthogonal basis.
{\em For simplicity we will write just $R_p$ 
and $R_q$ instead of $R_{p,k_p}$ and $R_{q,k_q}$, respectively.}
To reduce the use of brackets in upcoming calculations
we will assume throughout that the ordinary matrix
product has a higher precedence 
among algebraic operations than the Kronecker product.
For example, the notation $X\otimes YZ$ denotes $X\otimes(YZ)$.

We note that the vectors $a$ and $b$ defined in Theorem \ref{thm-main}
can be written as
\[
a=
\cv{1-c}{0}\ot R_q\cv{1}{0}
+
\cv{0}{1+c}\ot R_q\cv{0}{1}
\]
and
\[
b=
\cv{c+1}{0}\ot \cv{1}{0}
+
\cv{0}{c-1}\ot \cv{0}{1}.
\]

\begin{lemma}
\label{lem-k0-l1}
Assume that $m=0$ or $n=1$.
Then
\[
a^T(R_p^m\ot R_q^{n})b=0.
\]
\end{lemma}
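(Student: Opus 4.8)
The plan is to use the Kronecker-product expressions for $a$ and $b$ displayed just above, together with the mixed-product rule $(X\ot Y)(x\ot y)=(Xx)\ot(Yy)$ and the factorization of inner products $(x_1\ot y_1)^T(x_2\ot y_2)=(x_1^Tx_2)(y_1^Ty_2)$. Writing $a=u_1\ot w_1+u_2\ot w_2$ and $b=v_1\ot e_1+v_2\ot e_2$, where $u_1=\cv{1-c}{0}$, $u_2=\cv{0}{1+c}$, $w_1=R_q\cv{1}{0}$, $w_2=R_q\cv{0}{1}$, $v_1=\cv{c+1}{0}$, $v_2=\cv{0}{c-1}$, and $e_1,e_2$ is the standard basis of $\R^2$, the product expands as
\[
a^T(R_p^m\ot R_q^n)b=\sum_{i,j\in\{1,2\}}\bigl(u_i^TR_p^mv_j\bigr)\bigl(w_i^TR_q^ne_j\bigr),
\]
so the whole computation reduces to eight two-dimensional inner products.

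Next I would evaluate these planar factors explicitly. Since $R_p^m$ is the rotation by $\alpha:=2\pi mk_p/p$ and each of $u_i,v_j$ is a coordinate axis vector, one gets $u_1^TR_p^mv_1=(1-c^2)\cos\alpha=-u_2^TR_p^mv_2$, $u_1^TR_p^mv_2=(1-c)^2\sin\alpha$, and $u_2^TR_p^mv_1=(1+c)^2\sin\alpha$. On the other side, $w_1,w_2$ is the standard frame rotated by $\phi:=2\pi k_q/q$, while $R_q^ne_1,R_q^ne_2$ is the standard frame rotated by $n\phi$, so the angle-subtraction formulas give $w_1^TR_q^ne_1=w_2^TR_q^ne_2=\cos\gamma$ and $w_1^TR_q^ne_2=-w_2^TR_q^ne_1=\sin\gamma$, where $\gamma:=(1-n)\phi=2\pi(1-n)k_q/q$.

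Substituting these into the four-term sum, the two ``diagonal'' contributions are $\pm(1-c^2)\cos\alpha\cos\gamma$ and cancel, while the two ``off-diagonal'' ones combine to $\bigl[(1-c)^2-(1+c)^2\bigr]\sin\alpha\sin\gamma=-4c\,\sin\alpha\,\sin\gamma$. Hence
\[
a^T(R_p^m\ot R_q^n)b=-4c\,\sin\alpha\,\sin\gamma .
\]
It then only remains to observe that the hypothesis $m=0$ forces $\alpha=0$, hence $\sin\alpha=0$, and the hypothesis $n=1$ forces $\gamma=0$, hence $\sin\gamma=0$; in either case the right-hand side is $0$.

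I do not expect a genuine conceptual obstacle here; the only thing to be careful about is the bookkeeping in the expansion — in particular the signs of the off-diagonal $2\times2$ rotation entries and the orientation of the angle $\gamma$ — since a sign slip there would destroy the clean cancellation of the $\cos\alpha\cos\gamma$ terms and the emergence of the factor $-4c$.
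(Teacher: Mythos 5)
Your proof is correct and follows essentially the same route as the paper: the same decomposition of $a$ and $b$ into sums of Kronecker products, the same mixed-product expansion into four terms, and the same cancellation of the diagonal terms via the factor $\pm(1-c^2)$. The only difference is presentational --- you evaluate all eight planar inner products to obtain the closed form $-4c\sin\alpha\sin\gamma$ and then specialize, whereas the paper simply observes that under either hypothesis the off-diagonal terms vanish outright; your version has the minor bonus of showing that (for $c\neq 0$) the orthogonality holds exactly when $\sin\alpha=0$ or $\sin\gamma=0$.
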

\begin{proof}
Since the matrices $R_p$ and $R_q$ are orthogonal, we have
\begin{eqnarray*}
a^T(R_p^m\ot R_q^{n})b
&=&
\left[ 
\rv{1-c}{0}\ot \rv{1}{0}R_q^{-1}
+\rv{0}{1+c}\ot \rv{0}{1}R_q^{-1}
\right]\times 
\\
&&
(R_p^m\ot R_q^{n})
\left[
\cv{c+1}{0}\ot\cv{1}{0} + \cv{0}{c-1}\ot \cv{0}{1}
\right].
\end{eqnarray*}
After expanding the right-hand side 
into four terms
and simplifying
each of them as
$(T\ot U)(V\ot W)(X\ot Y)=TVX \ot UWY$ 
we get
\begin{eqnarray*}
a^T(R_p^m\ot R_q^{n})b
&=&
\rv{1-c}{0}R_p^m\cv{c+1}{0} \ot \rv{1}{0}R_q^{n-1}\cv{1}{0}
\\
&&
+ \rv{0}{1+c}R_p^m\cv{c+1}{0} \ot \rv{0}{1}R_q^{n-1}\cv{1}{0}
\\
&&
+ \rv{1-c}{0}R_p^m\cv{0}{c-1} \ot \rv{1}{0}R_q^{n-1}\cv{0}{1}
\\
&&
+ \rv{0}{1+c}R_p^m\cv{0}{c-1} \ot \rv{0}{1}R_q^{n-1}\cv{0}{1}.
\end{eqnarray*}
If $m=0$ then
\[
 \rv{0}{1+c}R_p^m\cv{c+1}{0} =  \rv{1-c}{0}R_p^m\cv{0}{c-1} = 0
\]
while if $n=1$ then
\[
\rv{0}{1}R_q^{n-1}\cv{1}{0} = \rv{1}{0}R_q^{n-1}\cv{0}{1} =0.
\]
Hence if $m=0$ or $n=1$ then
\begin{eqnarray*}
a^T(R_p^m\ot R_q^{n})b
=
(1-c^2)(R_p^m)_{1,1}(R_q^{n-1})_{1,1}
+
(c^2-1)(R_p^m)_{2,2}(R_q^{n-1})_{2,2}=0
\end{eqnarray*}
because $(R_s^u)_{1,1}=(R_s^u)_{2,2}$ for all $s,u$.
\end{proof}

\begin{lemma}
\label{lem-B0-orth} 
The set $B_0$ defined in Theorem~\ref{thm-main}
is an orthogonal basis of $\R^4$.
\end{lemma}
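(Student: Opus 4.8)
The plan is to show that the four vectors forming $B_0=\{a,\,M^{-1}a,\,b,\,M^{-r}b\}$, where we abbreviate $M=R_p\otimes R_q$, are nonzero and pairwise orthogonal; pairwise orthogonal nonzero vectors are linearly independent, so four of them form a basis of $\R^4$. Nonvanishing is immediate from the tensor decompositions of $a$ and $b$ recorded above: the two summands of $a$ are supported on complementary pairs of coordinates, and $R_q$ preserves norms, so $\|a\|^2=(1-c)^2+(1+c)^2=2(1+c^2)>0$, and likewise $\|b\|^2=(c+1)^2+(c-1)^2=2(1+c^2)>0$; since $M$ is orthogonal, $M^{-1}a$ and $M^{-r}b$ are nonzero as well. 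Everything then reduces to the six inner products, and for these I would use throughout that $M^j=R_p^j\otimes R_q^j$, that $M^T=M^{-1}$ (as $M$ is orthogonal), and that $R_p$ and $R_q$ have orders $p$ and $q$ (since $\gcd(k_p,p)=\gcd(k_q,q)=1$).

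Four of the six relations reduce directly to Lemma~\ref{lem-k0-l1}. We have $a^Tb=a^T(R_p^{0}\otimes R_q^{0})b$ and $(M^{-1}a)^Tb=a^TMb=a^T(R_p^{1}\otimes R_q^{1})b$, which vanish because their $R_q$-exponents are $0$ and $1$. Next, since $r\equiv 1\pmod p$ and $r\equiv -1\pmod q$, we have $R_p^{-r}=R_p^{-1}$ and $R_q^{-r}=R_q$, so $a^T(M^{-r}b)=a^T(R_p^{-1}\otimes R_q)b$ vanishes (its $R_q$-exponent is $1$); and $(M^{-1}a)^T(M^{-r}b)=a^TM^{1-r}b=a^T(R_p^{1-r}\otimes R_q^{1-r})b$ vanishes because $1-r\equiv 0\pmod p$, so its $R_p$-exponent is $0$. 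In each case Lemma~\ref{lem-k0-l1} applies as stated; although that lemma is phrased for particular nonnegative exponents, its proof only used the identity $(R_s^u)_{1,1}=(R_s^u)_{2,2}$, which holds for every integer $u$, so the negative powers above cause no difficulty.

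The remaining two relations, $a^TM^{-1}a=0$ and $b^TM^{-r}b=0$, are the heart of the matter and the only place where the defining equation for $c$ enters. Here $M^{-1}=R_p^{-1}\otimes R_q^{-1}$ and, by the same exponent arithmetic, $M^{-r}=R_p^{-1}\otimes R_q$. I would expand each of these two products into four terms via $(T\otimes U)(V\otimes W)(X\otimes Y)=TVX\otimes UWY$, exactly as in the proof of Lemma~\ref{lem-k0-l1}, and reduce every scalar factor using $(R_s^u)_{1,1}=(R_s^u)_{2,2}$. Writing $P=\cos\tfrac{2\pi k_p}{p}\cos\tfrac{2\pi k_q}{q}$ and $Q=\sin\tfrac{2\pi k_p}{p}\sin\tfrac{2\pi k_q}{q}$, I expect both products to collapse to the single expression $2[(c^2+1)P-(c^2-1)Q]$. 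Finally, because $\cos(2\pi(\tfrac{k_p}{p}-\tfrac{k_q}{q}))=P+Q$ and $\cos(2\pi(\tfrac{k_p}{p}+\tfrac{k_q}{q}))=P-Q$, the hypothesis on $c$ says $c^2(P-Q)=-(P+Q)$, whence $c^2+1=\tfrac{-2Q}{P-Q}$ and $c^2-1=\tfrac{-2P}{P-Q}$ (the denominator $P-Q$ being nonzero, as it must be for $c$ to exist), and therefore $(c^2+1)P-(c^2-1)Q=\tfrac{-2PQ+2PQ}{P-Q}=0$. The main obstacle I foresee is purely bookkeeping — correctly expanding and collecting the four Kronecker terms in each of the two products and keeping the modular exponent arithmetic straight; the closing trigonometric step is a one-line consequence of the definition of $c^2$, with nothing subtler hidden in it.
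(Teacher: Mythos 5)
Your proposal is correct and follows essentially the same route as the paper: reduce the orthogonality of $B_0$ to six inner products, dispatch four of them via Lemma~\ref{lem-k0-l1} (using $R_p^{-r}=R_p^{-1}$, $R_q^{-r}=R_q$, $R_p^{1-r}=I$), and handle $a^TM^{-1}a$ and $b^TM^{-r}b$ by expanding the Kronecker products and invoking the defining equation for $c^2$ — and your predicted collapsed expression $2[(c^2+1)P-(c^2-1)Q]$ is exactly the paper's expression~(\ref{eq-case-ii}). The explicit norm computation $\|a\|^2=\|b\|^2=2(1+c^2)$ is a small addition the paper leaves implicit, and your algebraic finish (solving for $c^2\pm1$) is a cosmetic variant of the paper's.
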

\begin{proof}
The matrix $M$ is orthogonal,
hence $M^j=(M^{-j})^T$ for all $j$.
Note that $M^{-r}=R_p^{-1}\ot R_q$ by the properties of $r$.
The dot product of $b$ and $M^{-1}a$ can be written as
$a^TMb$.
Furthermore the dot product of $M^{i-1}a$ and $M^{i-r}b$ can be written as
\[
a^TM^{1-i}M^{i-r}b
=
a^TM^{1-r}b
=a^T(I\ot R_q^2)b.
\]
Therefore we need to prove the following six equalities,
which we write in a uniform way that allows us
to treat four of the six cases simultaneously.
			\begin{enumerate} 
				\item[(i)]   $a^T\left(I\otimes I\right)b = 0$
				\item[(ii)]  $a^T\left(R_{p}^{-1}\otimes R_{q}^{-1}\right)a = 0$
				\item[(iii)] $b^T\left(R_{p}^{-1}\otimes R_{q}\right)b = 0$
				\item[(iv)]  $a^T\left(R_{p}^{-1}\otimes R_{q}\right)b = 0$
				\item[(v)]   $a^T\left(R_{p}\otimes R_{q}\right)b = 0$
				\item[(vi)]  $a^T\left(I\otimes R_{q}^2\right)b = 0$.
			\end{enumerate}

Equalities (i), (iv), (v) and (vi) follow from Lemma~\ref{lem-k0-l1}.

In case (ii) we get
\begin{eqnarray*}
a^T(R_p^{-1}\ot R_q^{-1})a
&=&
\left[ 
\rv{1-c}{0}\ot \rv{1}{0}R_q^{-1}
+\rv{0}{1+c}\ot \rv{0}{1}R_q^{-1}
\right]
\times 
\\
&&
(R_p^{-1}\ot R_q^{-1})
\left[
\cv{1-c}{0}\ot R_q\cv{1}{0}
+
\cv{0}{1+c}\ot R_q\cv{0}{1}
\right]
\\
&=&
\rv{1-c}{0}R_p^T\cv{1-c}{0} \ot \rv{1}{0}R_q^{T}\cv{1}{0}
\\
&&
+ \rv{0}{1+c}R_p^T\cv{1-c}{0} \ot \rv{0}{1}R_q^{T}\cv{1}{0}
\\
&&
+ \rv{1-c}{0}R_p^T\cv{0}{1+c} \ot \rv{1}{0}R_q^{T}\cv{0}{1}
\\
&&
+ \rv{0}{1+c}R_p^T\cv{0}{1+c} \ot \rv{0}{1}R_q^{T}\cv{0}{1}
\\
&=&
(1-c)^2(R_p)_{1,1}(R_q)_{1,1}
+(1-c^2)(R_p)_{1,2}(R_q)_{1,2}\\
&&
+(1-c^2)(R_p)_{2,1}(R_q)_{2,1}
+(1+c)^2(R_p)_{2,2}(R_q)_{2,2}.
\end{eqnarray*}
Since 
$(R_s)_{1,1}=(R_s)_{2,2}$ 
and
$(R_s)_{1,2}=-(R_s)_{2,1}$ for each $s$,
the last expression simplifies to
\begin{equation}
\label{eq-case-ii}
(2c^2+2)(R_p)_{1,1}(R_q)_{1,1}+(2-2c^2)(R_p)_{1,2}(R_q)_{1,2}.
\end{equation}
This is equal to 0 exactly when
\begin{eqnarray}
c^2&=&-\frac{(R_p)_{1,1}(R_q)_{1,1}+(R_p)_{1,2}(R_q)_{1,2}}
{(R_p)_{1,1}(R_q)_{1,1}-(R_p)_{1,2}(R_q)_{1,2}}
\nonumber
\\
&=&
-\frac{\cos\left(\frac{2\pi k_p}{p}\right)\cos\left(\frac{2\pi k_q}{q}\right)
+\sin\left(\frac{2\pi k_p}{p}\right)\sin\left(\frac{2\pi k_q}{q}\right)}
{\cos\left(\frac{2\pi k_p}{p}\right)\cos\left(\frac{2\pi k_q}{q}\right)
-\sin\left(\frac{2\pi k_p}{p}\right)\sin\left(\frac{2\pi k_q}{q}\right)}
\nonumber
\\
&=&
-\frac{\cos\left(2\pi \left(\frac{k_p}{p}-\frac{k_q}{q}\right)\right)}
{\cos\left(2\pi \left(\frac{k_p}{p}+\frac{k_q}{q}\right)\right)}.
\label{eq-c-squared}
\end{eqnarray}

In case (iii) we get
\begin{eqnarray*}
b^T(R_p^{-1}\ot R_q)b
&=&
\left[ 
\rv{c+1}{0}\ot \rv{1}{0}
+\rv{0}{c-1}\ot \rv{0}{1}
\right]
\times 
\\
&&
(R_p^{-1}\ot R_q)
\left[
\cv{c+1}{0}\ot \cv{1}{0}
+
\cv{0}{c-1}\ot \cv{0}{1}
\right]
\\
&=&
\rv{c+1}{0}R_p^T\cv{c+1}{0} \ot \rv{1}{0}R_q\cv{1}{0}
\\
&&
+ \rv{c+1}{0}R_p^T\cv{0}{c-1} \ot \rv{1}{0}R_q\cv{0}{1}
\\
&&
+ \rv{0}{c-1}R_p^T\cv{c+1}{0} \ot \rv{0}{1}R_q\cv{1}{0}
\\
&&
+ \rv{0}{c-1}R_p^T\cv{0}{c-1} \ot \rv{0}{1}R_q\cv{0}{1}
\\
&=&
(c+1)^2(R_p)_{1,1}(R_q)_{1,1}
+(c^2-1)(R_p)_{2,1}(R_q)_{1,2}\\
&&
+(c^2-1)(R_p)_{1,2}(R_q)_{2,1}
+(c-1)^2(R_p)_{2,2}(R_q)_{2,2}
\\
&=&
(2c^2+2)(R_p)_{1,1}(R_q)_{1,1}-2(c^2-1)(R_p)_{1,2}(R_q)_{1,2}
\end{eqnarray*}
which is equal to (\ref{eq-case-ii}).
Hence assuming that $c^2$ equals the expression (\ref{eq-c-squared}),
which is a
necessary and sufficient condition for equality (ii) to hold, 
also implies that equality (iii) holds.

This completes the proof of Lemma~\ref{lem-B0-orth} and
hence also the proof of Theorem~\ref{thm-main}.
\end{proof}

Next we show that the $2pq$ vectors
used in our construction are pairwise
linearly independent. Strictly speaking this is not required
for the proof of the Kochen-Specker pair property,
however it may be of interest for example in the physical
implementations of our construction.

\begin{proposition}
\label{prop-vec-distinct}
Let $M,a,b$ be as in Theorem~\ref{thm-main}.
The $2pq$ vectors\break
$M^ia\ (0\le i<pq)$ and $M^ib\ (0\le i<pq)$
are pairwise linearly independent.
\end{proposition}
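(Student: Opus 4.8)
The plan is to complexify $M$ and diagonalize it. Over $\C$, every real $2\times2$ rotation matrix is diagonalized by the \emph{same} basis $e_+=(1,-i)^T$, $e_-=(1,i)^T$, the rotation by angle $\theta$ acting as $e^{i\theta}$ on $e_+$ and as $e^{-i\theta}$ on $e_-$. Hence, writing $\omega_p=e^{2\pi ik_p/p}$, $\omega_q=e^{2\pi ik_q/q}$, $\omega_s^{+}=\omega_s$ and $\omega_s^{-}=\overline{\omega_s}$, the four vectors $e_\epsilon\otimes e_\delta$ ($\epsilon,\delta\in\{+,-\}$) form an eigenbasis of $M=R_p\otimes R_q$ in $\C^4$, with $M(e_\epsilon\otimes e_\delta)=\omega_p^{\epsilon}\omega_q^{\delta}(e_\epsilon\otimes e_\delta)$. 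The arithmetic input I need is that each eigenvalue $\omega_p^{\epsilon}\omega_q^{\delta}=\exp\!\big(2\pi i(\epsilon k_p q+\delta k_q p)/(pq)\big)$ is a \emph{primitive} $pq$-th root of unity: indeed $\gcd(\epsilon k_p q+\delta k_q p,\,p)=\gcd(k_p q,\,p)=1$ since $\gcd(k_p,p)=\gcd(q,p)=1$, symmetrically the gcd with $q$ equals $1$, and $\gcd(p,q)=1$. Consequently, for every integer $m$ with $pq\nmid m$, each eigenvalue of $M^m$ is a primitive $d$-th root of unity with $d\mid pq$ and $d>1$; as $pq$ is odd, $d\ge3$, so $M^m$ has no real eigenvalue.

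Now suppose two distinct vectors from the list are linearly dependent. Since $M$ is invertible and $a,b$ are clearly nonzero, all $2pq$ vectors are nonzero, so the dependence can be written $M^i u=\lambda M^j v$ with $u,v\in\{a,b\}$, $\lambda\in\R\setminus\{0\}$ and $0\le i,j<pq$. If $u=v$ then $i\ne j$, and $M^{i-j}u=\lambda u$ exhibits $\lambda$ as a real eigenvalue of $M^{i-j}$; but $pq\nmid i-j$, so by the previous paragraph $M^{i-j}$ has no real eigenvalue — a contradiction. Hence the vectors $M^ia$ are pairwise linearly independent, and so are the $M^ib$.

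It remains to exclude $u=a$, $v=b$ (the case $u=b$, $v=a$ being symmetric). I would first record that $c^2\ne1$: otherwise the defining identity for $c^2$ would give $\cos\!\big(2\pi(\tfrac{k_p}{p}-\tfrac{k_q}{q})\big)+\cos\!\big(2\pi(\tfrac{k_p}{p}+\tfrac{k_q}{q})\big)=0$, i.e.\ $\cos\tfrac{2\pi k_p}{p}\cos\tfrac{2\pi k_q}{q}=0$, which is impossible because $\cos\tfrac{2\pi k_p}{p}=0$ would force $p\mid4k_p$, contradicting $\gcd(k_p,p)=1$ with $p\ge3$ odd (and symmetrically $\cos\tfrac{2\pi k_q}{q}\ne0$). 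Given $c^2\ne1$, there are two short ways to finish. (i) Identify $\R^4=\R^2\otimes\R^2$ with the space of real $2\times2$ matrices by $u\otimes w\mapsto uw^T$; then $M$ acts as $V\mapsto R_pVR_q^T$ and hence preserves $\det$ (since $\det R_p=\det R_q=1$), while the decompositions of $a$ and $b$ displayed just before Lemma~\ref{lem-k0-l1} show that $a$ and $b$ correspond to matrices of determinants $1-c^2$ and $c^2-1$ respectively. Then $M^ia=\lambda M^jb$ would give $1-c^2=\det(M^ia)=\lambda^2\det(M^jb)=\lambda^2(c^2-1)$, i.e.\ $\lambda^2=-1$, which is impossible over $\R$. (ii) Alternatively, expand $a$ and $b$ in the eigenbasis of $M$ above; all eight coefficients are nonzero since $c\ne0$, and comparing the $e_+\otimes e_+$ and $e_+\otimes e_-$ coefficients in $M^{i-j}a=\lambda b$ yields $|\lambda|=|c|$ and $|\lambda|=|c|^{-1}$, again forcing $c^2=1$.

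Together the three cases show that the $2pq$ vectors are pairwise linearly independent. The step I expect to be the main obstacle is the mixed case $u=a$, $v=b$: it is the only one where the exact value of $c$ enters (through $c^2\ne1$), and getting a clean, computer-free contradiction there is what dictates introducing the determinant invariant (or, equivalently, carrying out the eigenbasis expansion). The same-family cases, by contrast, use only that $M$ is invertible together with the fact about the eigenvalues of $M^m$ established above.
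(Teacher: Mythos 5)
Your proof is correct, and while the first half coincides with the paper's argument, the second half takes a genuinely different route. For the same-family cases ($M^ia$ vs.\ $M^ja$, and $M^ib$ vs.\ $M^jb$) you and the paper argue identically: the eigenvalues of $M^m$ for $pq\nmid m$ are non-real roots of unity (your observation that they are primitive $d$-th roots with $d\mid pq$ odd and $d\ge 3$ is a slightly more explicit version of the paper's statement), so $M^m$ has no real eigenvector. The divergence is in the mixed case $M^ia$ vs.\ $M^jb$. The paper exploits the zeros in positions $2$ and $3$ of $b$: it computes $(M^ia)_2$ and $(M^ia)_3$ in coordinates, deduces that $R_p^m$ and $R_q^{n-1}$ must both be $\pm I$ or both $\pm Z$, rules out $\pm Z$ by parity of $p,q$, and in the $\pm I$ case reaches the unsolvable real equation $\frac{1-c}{c+1}=\frac{1+c}{c-1}$. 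You instead introduce a quadratic invariant: under the identification $u\otimes w\mapsto uw^T$ the map $M$ acts by $V\mapsto R_pVR_q^T$ and preserves $\det$, while $a$ and $b$ correspond to matrices of determinant $1-c^2$ and $c^2-1$; a dependence $M^ia=\lambda M^jb$ then forces $\lambda^2=-1$. This is shorter and avoids all case analysis, at the cost of needing $c^2\neq 1$, which you correctly derive from the defining identity for $c^2$ together with $\gcd(k_p,p)=\gcd(k_q,q)=1$ and $p,q$ odd (the paper only needs the weaker observation that dependence itself forces $c\neq\pm1$). Your eigenbasis alternative (ii) is likewise valid and closer in spirit to the first half of the proof. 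Both of your routes are computer-free and would serve as drop-in replacements for the paper's longer coordinate computation.
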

\begin{proof}
For a positive integer $s$ let $\z_s=e^{2\pi\sqrt{-1}/s}$ denote
the primitive $s$-th root of unity in $\C$.
The eigenvalues of $R_{p,k_p}^s \otimes R_{q,k_q}^t$ 
are $\z_p^{\pm sk_p}\z_q^{\pm tk_q}$ with all four combinations of $\pm$
signs in the exponents. Since $\gcd(p,q)=\gcd(p,k_p)=\gcd(q,k_q)=1$
and $p,q$ are odd,
it follows that the eigenvalues $\z_p^{\pm sk_p}\z_q^{\pm tk_q}$
are not real unless $s=t=0$. It follows that $M^i$ does not have
a real eigenvector for $0<i<pq$, hence 
$M^ia$ and $M^ja$ are linearly independent whenever\break 
$i\not\equiv j\pmod{pq}$,
and likewise
$M^ib$ and $M^jb$ are linearly independent whenever $i\not\equiv j\pmod{pq}$.

It remains to consider 
the possibility
that $M^ia$ and $b$ are linearly dependent
for some~$i$. 
Then $(M^ia)_2=(M^ia)_3=0$. 
Let $M^{-i}=R_p^m\otimes R_q^{n}$.
Explicit calculations  give
\begin{eqnarray}
(M^ia)_2 
&=&
a^T(R_p^m\otimes R_q^{n})\left[\cv{1}{0}\otimes \cv{0}{1}\right]
\nonumber
\\
&=& (1-c)(R_p^m)_{1,1}(R_q^{n-1})_{1,2}+(1+c)(R_p^m)_{2,1}(R_q^{n-1})_{2,2}
\label{eq-Mia2}
\\
(M^ia)_3 
&=&
a^T(R_p^m\otimes R_q^{n})\left[\cv{0}{1}\otimes \cv{1}{0}\right]
\nonumber
\\
&=& -(1+c)(R_p^m)_{1,1}(R_q^{n-1})_{1,2}-(1-c)(R_p^m)_{2,1}(R_q^{n-1})_{2,2}.
\label{eq-Mia3}
\end{eqnarray}
Assume that the right-hand sides of 
(\ref{eq-Mia2}) and (\ref{eq-Mia3}) both equal zero,
and subtract the latter from the former.
This gives
$(R_p^m)_{1,1}(R_q^{n-1})_{1,2}
=
-(R_p^m)_{2,1}(R_q^{n-1})_{2,2}$.
After plugging this into (\ref{eq-Mia2}) and performing some
elementary manipulations we deduce that $c=0$ or
\begin{equation}
\label{eq-last}
(R_p^m)_{1,1}(R_q^{n-1})_{1,2}
=
(R_p^m)_{2,1}(R_q^{n-1})_{2,2}=0.
\end{equation}
Since we assume a choice of $k_p,k_q$ such that $c\neq 0$,
we must have (\ref{eq-last}).
Denote
\[
Z=\left(
\begin{array}{cc}
0& -1\\
1& 0
\end{array}
\right).
\]
A quick argument shows that (\ref{eq-last}) implies
that either both $R_p^m$ and $R_q^{n-1}$
are equal to $Z$ or $-Z$,
or they are both equal to $I$ or $-I$.
The first case is not possible, since $p$ and $q$ are odd.
So assume that both $R_p^m$ and $R_q^{n-1}$
are equal to $I$ or $-I$. Then a calculation similar to those above shows
that
\[
M^ia =
\left(
\begin{array}{c}
\pm(1-c)\\
0\\
0\\
\pm(1+c)
\end{array}
\right)
\]
where signs are to be taken consistently. In comparison
with equation (\ref{eq-a-b}) we see that if $M^ia$ and $b$
are linearly dependent, then $c\neq\pm 1$,
and
$\frac{1-c}{c+1}=\frac{1+c}{c-1}$.
The last equation has no real solution, and this completes the proof.
\end{proof}

\newpage
\section{KS pairs as line graphs}

We now explain a connection of the KS pairs
constructed in Section~\ref{sec-constr}
and certain line graphs.
The material in this section is not required for proving the correctness
of our construction. It rather serves as an illustration
of the construction. We also mention analogy
with two other significant KS pairs
discovered earlier,
which also can be represented by line graphs.
Background information for the concepts
of graph and line graph can be found for example
in \cite{vLW}, or in many other places.

Suppose that $G$ is a simple graph. Recall
	that the {\em line graph} of $G$, denoted $L(G)$,
	is defined as follows. The vertices of $L(G)$ are
	the edges of $G$, and two vertices of $L(G)$ are adjacent
	if and only if the corresponding edges of $G$ share an endpoint. Note that if $v$ is a vertex of $G$ of degree $d$,
then the $d$ edges incident with $v$ in $G$ induce
a clique (complete subgraph) on $d$ vertices in $L(G)$.

Let $G$ be a simple graph and suppose
	that each vertex is labelled with a nonzero vector in $\C^d$
	for some $d$, such that if two vertices are adjacent
	in $G$, then the corresponding vectors
	are orthogonal in $\C^d$. We say that
	such a labelling of vertices of $G$ is 
	an {\em orthogonal representation} of $G$ in $\C^d$.
	Note that if $v$ is a vertex of degree $d$ in
	a graph $H$
	and the line graph $L(H)$ has  
	an orthogonal representation in $\C^d$,
	then the labels of the $d$ vertices of $L(H)$
	that correspond to the $d$ edges incident with $v$ in $H$
	form an {\em orthogonal basis of $\C^d$.}
	
	It follows from the discussion above that
		 if $G$ is a graph such that $G$
		has an odd number of vertices and each vertex of $G$ has the same degree $d$ (we say $G$ is $d$-regular),
		and $L(G)$ has an orthogonal representation in $\C^d$,
		then {\em this orthogonal representation is a KS pair.}
			Two important KS pairs
	discovered in the past have this particular structure. These are the KS pair
	with 18 vectors and 9 bases in $\R^4$  discovered
	by Cabello \cite{Adan-18} 
	that has the smallest number of vectors
	among known KS pairs,
	and the KS pair
	with 21 vectors and 7 bases in $\C^6$  discovered
	by Lison\v ek et al.\ \cite{Lis-PRA}
	which has the smallest number
	of bases among known KS pairs.
	The former KS pair is an orthogonal representation
	of the line graph $L(P_9)$ where $P_9$ is
	the Paley graph on 9 vertices (see \cite[Chapter~21]{vLW}).
	The latter KS pair is an orthogonal representation
	of the line graph $L(K_7)$ where $K_7$ is
	the complete graph on 7 vertices (the graph in which
	each two distinct vertices are adjacent).

Motivated by the fact that some important
	KS pairs can be represented using line graphs
	of highly symmetric graphs, we set out to seek
	more examples in this form. We tried
	to computationally find orthogonal representations of graphs $L(G)$
	in $\C^d$, where $G$ is $d$-regular vertex-transitive graph, using the lists
	of vertex-transitive graphs \cite{trans}. (A graph $G$
	is vertex-transitive if for any pair of vertices
	$u,v$ there is an automorphism of $G$ that maps $u$ to $v$;
	edge-transitive graphs are defined analogously.)
	A family of examples emerged from this search, which
	we were able to construct analytically (computer-free);
	the results are presented in this paper. In particular
the presentation of our new KS pairs using line graphs
of so-called chordal rings is given in Section~\ref{sec-line-cr}
below.

\subsection{Line graphs of chordal rings}
\label{sec-line-cr}

For integers $n$ and $k$ 
such that $1<k<n-1$
let  the {\em chordal ring}
with parameters $n$ and $k$ be the graph with vertex set $\Z_n$
(integers modulo~$n$) and with edge set 
\[
\{\{a,a+1\}:a\in\Z_n\}\cup\{\{a,a+k\}:a\in\Z_n\}.
\]
We will denote the chordal ring
with parameters $n$ and $k$ by ${\rm CR}(n,k)$.
It is immediate that ${\rm CR}(n,k)$
	is isomorphic to ${\rm CR}(n,n-k)$.
	If $1<k\le n/2$ then we can visualize
${\rm CR}(n,k)$ as the $n$-cycle 
(``ring'')
to which
all chords connecting pairs of vertices at distance $k$ were added;
this justifies the term ``chordal ring.''
For illustration we show a drawing of {\rm CR}(15,4)
in Figure~\ref{fig-cr}.

\begin{figure}[h]
	\centering
	\begin{tikzpicture} 
	\draw [fill] (0*2.5,1*2.5) circle [radius=0.05];
	\node [above] at (0,2.5) {0};
	\draw [fill] (0.406737*2.5, 0.913545*2.5) circle [radius=0.05];
	\node [above] at (0.406737*2.5, 0.913545*2.5) {14};
	\draw [fill] (0.743145*2.5, 0.669131*2.5) circle [radius=0.05];
	\node [right] at (0.743145*2.5, 0.669131*2.75) {13};
	\draw [fill] (0.951057*2.5, 0.309017*2.5) circle [radius=0.05];
	\node [right] at (0.951057*2.5, 0.309017*2.75) {12};
	\draw [fill] (0.994522*2.5, -0.104528*2.5) circle [radius=0.05];
	\node [right] at (0.994522*2.5, -0.104528*2.75) {11};
	\draw [fill] (0.866025*2.5, -0.5*2.5) circle [radius=0.05];
	\node [right] at (0.866025*2.5, -0.5*2.75) {10};
	\draw [fill] (0.587785*2.5, -0.809017*2.5) circle [radius=0.05];
	\node [right] at (0.587785*2.5, -0.809017*2.75) {9};
	\draw [fill] (0.207912*2.5, -0.978148*2.5) circle [radius=0.05];
	\node [right] at (0.207912*2, -0.978148*2.75) {8};
	\draw [fill] (-0.207912*2.5, -0.978148*2.5) circle [radius=0.05];
	\node [left] at (-0.207912*2, -0.978148*2.75) {7};
	\draw [fill] (-0.587785*2.5, -0.809017*2.5) circle [radius=0.05];
	\node [left] at (-0.587785*2.5, -0.809017*2.75) {6};
	\draw [fill] (-0.866025*2.5, -0.5*2.5) circle [radius=0.05];
	\node [left] at (-0.866025*2.5, -0.5*2.75) {5};
	\draw [fill] (-0.994522*2.5, -0.104528*2.5) circle [radius=0.05];
	\node [left] at (-0.994522*2.5, -0.104528*2.75) {4};
	\draw [fill] (-0.951057*2.5, 0.309017*2.5) circle [radius=0.05];
	\node [left] at (-0.951057*2.5, 0.309017*2.75) {3};
	\draw [fill] (-0.743145*2.5, 0.669131*2.5) circle [radius=0.05];
	\node [left] at (-0.743145*2.5, 0.669131*2.75) {2};
	\draw [fill] (-0.406737*2.5, 0.913545*2.5) circle [radius=0.05];
	\node [above] at (-0.406737*2.75, 0.913545*2.5) {1};
	
	\draw (0*2.5,1*2.5) -- (0.406737*2.5, 0.913545*2.5);
	\draw (0.406737*2.5, 0.913545*2.5) -- (0.743145*2.5, 0.669131*2.5);
	\draw (0.743145*2.5, 0.669131*2.5) -- (0.951057*2.5, 0.309017*2.5);
	\draw (0.951057*2.5, 0.309017*2.5) -- (0.994522*2.5, -0.104528*2.5);
	\draw (0.994522*2.5, -0.104528*2.5) -- (0.866025*2.5, -0.5*2.5);
	\draw (0.866025*2.5, -0.5*2.5) -- (0.587785*2.5, -0.809017*2.5);
	\draw (0.587785*2.5, -0.809017*2.5) -- (0.207912*2.5, -0.978148*2.5);
	\draw (0.207912*2.5, -0.978148*2.5) -- (-0.207912*2.5, -0.978148*2.5);
	\draw (-0.207912*2.5, -0.978148*2.5) -- (-0.587785*2.5, -0.809017*2.5);
	\draw (-0.587785*2.5, -0.809017*2.5) -- (-0.866025*2.5, -0.5*2.5);
	\draw (-0.866025*2.5, -0.5*2.5) -- (-0.994522*2.5, -0.104528*2.5);
	\draw (-0.994522*2.5, -0.104528*2.5) -- (-0.951057*2.5, 0.309017*2.5);
	\draw (-0.951057*2.5, 0.309017*2.5) -- (-0.743145*2.5, 0.669131*2.5);
	\draw (-0.743145*2.5, 0.669131*2.5) -- (-0.406737*2.5, 0.913545*2.5);
	\draw (-0.406737*2.5, 0.913545*2.5) -- (0*2.5,1*2.5);
	
	\draw (0*2.5,1*2.5) -- (0.994522*2.5, -0.104528*2.5);
	\draw (0.406737*2.5, 0.913545*2.5) -- (0.866025*2.5, -0.5*2.5);
	\draw (0.743145*2.5, 0.669131*2.5) -- (0.587785*2.5, -0.809017*2.5);
	\draw (0.951057*2.5, 0.309017*2.5) -- (0.207912*2.5, -0.978148*2.5);
	\draw (0.994522*2.5, -0.104528*2.5) -- (-0.207912*2.5, -0.978148*2.5);
	\draw (0.866025*2.5, -0.5*2.5) -- (-0.587785*2.5, -0.809017*2.5);
	\draw (0.587785*2.5, -0.809017*2.5) -- (-0.866025*2.5, -0.5*2.5);
	\draw (0.207912*2.5, -0.978148*2.5) -- (-0.994522*2.5, -0.104528*2.5);
	\draw (-0.207912*2.5, -0.978148*2.5) -- (-0.951057*2.5, 0.309017*2.5);
	\draw (-0.587785*2.5, -0.809017*2.5) -- (-0.743145*2.5, 0.669131*2.5);
	\draw (-0.866025*2.5, -0.5*2.5) -- (-0.406737*2.5, 0.913545*2.5);
	\draw (-0.994522*2.5, -0.104528*2.5) -- (0*2.5,1*2.5);
	\draw (-0.951057*2.5, 0.309017*2.5) -- (0.406737*2.5, 0.913545*2.5);
	\draw (-0.743145*2.5, 0.669131*2.5) -- (0.743145*2.5, 0.669131*2.5);
	\draw (-0.406737*2.5, 0.913545*2.5) -- (0.951057*2.5, 0.309017*2.5);
	\end{tikzpicture}
	\caption{The graph $\rm{CR}(15,4)$.} \label{CR_15_4}
	\label{fig-cr}
\end{figure}

It can be seen
from the construction of the orthogonal bases $B_i$ in Theorem~\ref{thm-main} and from the arguments
given in the proof of this theorem
that 
the KS pair constructed in Theorem~\ref{thm-main}	
	is 
	an orthogonal representation of $L({\rm CR}(pq,r))$,
	the line graph of the chordal ring ${\rm CR}(pq,r)$.
	In particular the $pq$ vertices of ${\rm CR}(pq,r)$
	correspond to the $pq$ orthogonal bases $B_i$ ($0\le i<pq$),
	and for each vertex of ${\rm CR}(pq,r)$ 
	%representing
	%one of the bases $B_i$, 
	the four edges incident
	with that vertex correspond to the four vectors
	that form the corresponding orthogonal basis.

\subsection{Vertex transitivity}

It is of special interest in quantum information
theory to know that the graph $L({\rm CR}(pq,r))$ 
is {\em vertex transitive,}
assuming that $p$, $q$ and $r$ are as in Theorem~\ref{thm-main}.
It can be equivalently stated as follows.

\begin{proposition}
Let $p$, $q$ and $r$ be as in Theorem~\ref{thm-main}.
The graph ${\rm CR}(pq,r)$ is edge transitive.
\end{proposition}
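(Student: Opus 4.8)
The plan is to produce enough automorphisms of ${\rm CR}(pq,r)$ to carry any edge to any other edge. Call the edges $\{a,a+1\}$ the \emph{ring edges} and the edges $\{a,a+r\}$ the \emph{chord edges}. Every translation $\tau_t\colon x\mapsto x+t$ (for $t\in\Z_{pq}$) is obviously an automorphism, and the translations already act transitively on the set of ring edges and transitively on the set of chord edges. So it suffices to exhibit a single automorphism that interchanges a ring edge with a chord edge.

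The key arithmetic observation is that $r^2\equiv 1\pmod{pq}$: since $r\equiv 1\pmod p$ we get $r^2\equiv 1\pmod p$, and since $r\equiv -1\pmod q$ we get $r^2\equiv 1\pmod q$, so the claim follows from the Chinese remainder theorem. In particular $\gcd(r,pq)=1$, and $r$ is its own inverse modulo $pq$. (For completeness one should also check $1<r<pq-1$, so that ${\rm CR}(pq,r)$ is defined at all: $r=1$ would force $1\equiv -1\pmod q$ and $r=pq-1$ would force $1\equiv -1\pmod p$, both impossible for $p,q\ge 3$.)

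Next I would define $\phi\colon\Z_{pq}\to\Z_{pq}$ by $\phi(x)=rx$. Because $\gcd(r,pq)=1$ it is a bijection of the vertex set, and because $r^2\equiv 1\pmod{pq}$ it is an involution. It sends the ring edge $\{a,a+1\}$ to $\{ra,ra+r\}$, a chord edge, and it sends the chord edge $\{a,a+r\}$ to $\{ra,ra+r^2\}=\{ra,ra+1\}$, a ring edge. Hence $\phi$ maps the edge set into itself, and being an involution it maps the edge set onto itself, so $\phi$ is an automorphism of ${\rm CR}(pq,r)$ that swaps the two classes of edges.

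Combining the two ingredients: given edges $e,e'$, if they belong to the same class a suitable $\tau_t$ maps $e$ to $e'$; if they belong to different classes, apply $\phi$ first to bring them into the same class and then a translation. Therefore the automorphism group of ${\rm CR}(pq,r)$ acts transitively on its edges, which is the assertion. I do not expect a genuine obstacle: the only point requiring a little care is checking that $\phi$ sends \emph{every} edge to an edge (not merely some of them), but this is immediate from $r^2\equiv 1\pmod{pq}$. As a by-product, the line graph $G$ of ${\rm CR}(pq,r)$ discussed above is vertex transitive.
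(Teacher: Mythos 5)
Your proposal is correct and follows essentially the same route as the paper: translations handle edges of the same type, and the multiplication map $x\mapsto rx$, using $r^2\equiv 1\pmod{pq}$ from the Chinese remainder theorem, swaps ring edges with chords. Your extra check that $1<r<pq-1$ (so that ${\rm CR}(pq,r)$ is well defined) is a nice touch that the paper only states implicitly as $r\not\equiv\pm1\pmod{pq}$.
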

\begin{proof}
Consider any two edges of ${\rm CR}(pq,r)$.
If they are both ``ring'' edges or they are both chords,
then there is a cyclic shift of the vertices of 
${\rm CR}(pq,r)$ which is an automorphism of ${\rm CR}(pq,r)$
and it maps one of the edges to the other edge.

We are left with the case when one of the edges is a ``ring'' edge
and the other edge is a chord. Without loss of generality assume
that the edges are $\{0,1\}$ and $\{0,r\}$.
By the assumption of Theorem~\ref{thm-main} we have
$r\equiv 1\pmod{p}$ and $r\equiv -1\pmod{q}$.
Therefore $r$ is relatively prime to $pq$ and $r^2\equiv 1\pmod{pq}$
and $r\not\equiv \pm 1\pmod{pq}$.
Therefore the mapping 
$\varphi:a\mapsto ra$ is a bijection from $\Z_{pq}$ to $\Z_{pq}$,
the vertex set of ${\rm CR}(pq,r)$.
Its action on ring edges is $\varphi(\{a,a+1\})=\{ra,ra+r\}$
hence each ring edge is mapped to a chord.
Its action on chords is $\varphi(\{a,a+r\})=\{ra,ra+r^2\}=\{ra,ra+1\}$
hence each chord is mapped to a ring edge. 
Therefore $\varphi$ is an automorphism of  ${\rm CR}(pq,r)$.
Since $\varphi(\{0,1\})=\{0,r\}$, it follows that
${\rm CR}(pq,r)$ is edge transitive. 
\end{proof}

\section{A numerical example}

At the suggestion of a referee we conclude
the paper with a numerical
example for our KS pair construction given in
Theorem \ref{thm-main}. This example serves
as an illustration only.

Recall that Theorem \ref{thm-main} requires 
	the number of bases of the KS pair
	to be written as the product of two relatively prime
	odd integers $p$ and $q$ both grater than or equal to $3$.
	Hence the smallest example has $15$ bases.
Let us take $p=3$ and $q=5$. 
According to Theorem \ref{thm-main}
the value of $r$ 
is the unique solution in
the interval $(0,15)$ to the system of congruences
\begin{eqnarray*}
r&\equiv& \phantom{-}1\pmod{3}\\
r&\equiv& -1\pmod{5}.
\end{eqnarray*}
This solution is $r=4$.

Therefore, as explained above,
the graph $\rm{CR}(15,4)$ in Figure \ref{fig-cr}
illustrates a KS pair with $pq=15$ orthogonal bases of $\R^4$
and $2pq=30$ vectors.
Each vertex corresponds to one basis
of the KS pair, and the four edges incident
with a vertex correspond to the four vectors
forming the basis corresponding to that vertex.

Applying equation (\ref{eq-kx})
	we get $k_p=\lceil \frac{3}{4} \rceil=1$
	and $k_q=\lfloor \frac{5}{4} \rfloor=1$. 
From now on we will round all 
numerical values obtained in 
forthcoming computations to six decimal places.
From equation (\ref{eq-csq})
we get
\[
c
=\sqrt{
-\frac{\cos\left(2\pi \left(\frac{1}{3}-\frac{1}{5}\right)\right)}
{\cos\left(2\pi \left(\frac{1}{3}+\frac{1}{5}\right)\right)}
}
=0.827091.
\]
Plugging into (\ref{eq-a-b}) we get
\[
%Vector[column](4, [0.05343184559, 0.1644463119, -1.737666721, %0.5646021420])
a=\left(
\begin{array}{c}
0.053432\\
0.164446\\
-1.737667\\
0.564602
\end{array}\right)
\mbox{\ \ \ and\ \ \ }
%Vector[column](4, [1.827090915, 0., 0., -0.1729090848])
b=\left(
\begin{array}{c}
1.827091\\
0\\
0\\
-0.172909
\end{array}\right).
\]
From equation (\ref{eq-Rms}) we get
\begin{eqnarray*}
R_{p,k_p}&=&R_{3,1}=
\left(
\begin{array}{cc}
	\cos\left(\frac{2\pi}{3}\right)
	& -\sin\left(\frac{2\pi}{3}\right) \\
	\sin\left(\frac{2\pi}{3}\right)
	&
	\cos\left(\frac{2\pi}{3}\right)
\end{array}
\right)
\\
R_{q,k_q}&=&R_{5,1}=
\left(
\begin{array}{cc}
	\cos\left(\frac{2\pi}{5}\right)
	& -\sin\left(\frac{2\pi}{5}\right) \\
	\sin\left(\frac{2\pi}{5}\right)
	&
	\cos\left(\frac{2\pi}{5}\right)
\end{array}
\right)
\end{eqnarray*}
and the matrix $M$ defined in Theorem \ref{thm-main}
is
%Matrix(4, 4, [[-0.15450848, 0.47552826, -0.26761654, 0.82363910], [-0.47552826, -0.15450848, -0.82363910, -0.26761654], [0.26761654, -0.82363910, -0.15450848, 0.47552826], [0.82363910, 0.26761654, -0.47552826, -0.15450848]])
\[
M = R_{3,1} \otimes R_{5,1}
=
\left(
\begin{array}{cccc}
-0.154508 & 0.475528 & -0.267617 & 0.823639 \\ 
-0.475528 & -0.154508 & -0.823639 & -0.267617 \\
0.267617 & -0.823639 & -0.154508 & 0.475528 \\ 
0.823639 & 0.267617 & -0.475528 & -0.154508
\end{array}
\right).
\]
With $r$, $a$, $b$ and $M$  determined, all vectors and bases
of the KS pair can now be computed easily.
Recall from Theorem \ref{thm-main}
that there are $2pq$ vectors in total, namely
$M^ia$ and $M^ib$ where $0\le i<pq$, and $pq$ orthogonal bases
$B_i$ where $0\le i<pq$, and each basis $B_i$ consists
of vectors $M^ia$, $M^{i-1}a$, $M^ib$ and $M^{i-r}b$.
We will finish this example by computing the vectors in 
one of the bases. Let us take, for example, $i=7$.
Then 
\[
B_7=\{M^7a, M^6a, M^7b, M^3b\}
\]
and the explicit coordinates of the vectors in $B_7$ are
\begin{eqnarray*}
%\label{eq-B7}
M^7a=
%Matrix(4, 1, [[-0.86011361], [1.3309301], [-0.65811371], [0.65105669]])
\left(
\begin{array}{c}
-0.860114\\
1.330930\\ -0.658114\\ 0.651057
\end{array}
\right),\ \ \
&
M^6a=
% [[-0.13988639],[0.10163341],[-1.0739371],[-1.4781476]]
\left(
\begin{array}{c}
-0.139886\\ 0.101633 \\ -1.073937 \\ -1.478147
\end{array}
\right),
\\
M^7b=
% [[0.65105669],[-0.65811371],[-1.3309301],[0.86011361]]
\left(
\begin{array}{c}
0.651057 \\ -0.658114\\ -1.330930\\ 0.860114
\end{array}
\right),\ \ \
&
M^3b=
% [[-1.4781476],[-1.0739371],[-0.10163341],[0.13988639]]
\left(
\begin{array}{c}
-1.478148\\ -1.073937\\ -0.101633\\ 0.139886
\end{array}
\right).
\end{eqnarray*}
It can be checked numerically that the dot product
of any two distinct vectors in $B_7$
is zero up to a rounding error. This concludes the example.

\section*{Acknowledgement}
Research of both authors was supported
in part by the Natural Sciences and Engineering Research Council of Canada
(NSERC).

\end{document}